\documentclass[11pt]{article}

\usepackage{amsmath,amssymb,amsthm}
\usepackage{geometry}
\newtheorem{theorem}{Theorem}[section]
\newtheorem{proposition}[theorem]{Proposition}
\newtheorem{corollary}[theorem]{Corollary}
\theoremstyle{definition}

\newtheorem{remark}[theorem]{Remark}

\title{\textbf{A Structural Criterion for the Applicability of Algebraic Phase Theory}}
\author{
Joe Gildea\\
Department of Computing Science and Mathematics,\\
School of Informatics and Creative Arts,\\
Dundalk Institute of Technology\\
\texttt{gildeajoe@gmail.com}}
\date{}

\begin{document}
\maketitle


\begin{abstract}
Algebraic Phase Theory (APT) exhibits a marked structural selectivity.
In certain mathematical and physical settings it gives rise to rigidity
phenomena, constrained representation behaviour, and reductions in apparent
degrees of freedom, while in many analytic or dynamical contexts the finite-depth
APT framework does not naturally apply.
This paper studies the structural origin of this asymmetry.

We establish a structural criterion for the existence of a
nondegenerate finite-depth Algebraic Phase Theory structure.
The criterion isolates three conditions: nondegenerate phase duality,
compatibility of admissible dynamics with phase interaction, and finite or
terminating defect propagation.
Within the framework considered here, these conditions are jointly necessary
and sufficient.
When they are satisfied, the resulting phase structure exhibits strong rigidity
properties; when one of the conditions fails, the associated domain falls
outside the intended finite-depth APT setting.

As consequences, phenomena such as Fourier decomposition, Bethe-type exact
solvability, rigidity of stabilizer codes, and uniqueness phenomena associated
with certain canonical representations can be interpreted as structural
manifestations of these conditions rather than isolated constructions.

The results therefore clarify both the scope and the structural limitations
of Algebraic Phase Theory within the finite-depth setting considered here.
\end{abstract}

\medskip
\noindent\textbf{MSC 2020.}
Primary 16S10; Secondary 18A25, 20C99.

\noindent\textbf{Keywords.}
Algebraic phase theory; structural rigidity; obstruction theory;
phase duality; symmetry compatibility; filtered structures.

\section{Introduction}

Algebraic Phase Theory (APT) exhibits a striking asymmetry.
In certain mathematical and physical domains it gives rise to strong rigidity
phenomena, constrained representation behaviour, and reductions in apparent
degrees of freedom. In many other settings, however, the framework does not
naturally apply.

This behaviour is not merely technical.
In domains compatible with the APT axioms, admissible dynamics often admit
highly constrained phase organisation: weak equivalences may reduce to stronger
forms of equivalence, interaction behaviour may become tightly controlled by
phase data, and defect propagation may terminate at finite depth.
In settings where the structural conditions fail, however, the finite-depth
APT framework no longer provides a natural description of the underlying
interaction behaviour.
This contrast appears across harmonic analysis, quantum stabiliser theory,
and algebraic models of integrability, and differs substantially from analytic
or dynamical frameworks in which interaction complexity persists or proliferates
\cite{vonNeumann1931,Howe1979Heisenberg,Folland,Gottesman,CalderbankShor}.

This raises a natural foundational question:

\begin{quote}
\emph{Why does Algebraic Phase Theory exhibit strong rigidity behaviour in
certain domains, while failing to apply naturally in many others?}
\end{quote}

The purpose of this paper is to show that this asymmetry is structural rather
than accidental.
Algebraic Phase Theory carries an implicit structural filter: some domains
satisfy the compatibility conditions required for finite-depth phase
organisation, while others fail to satisfy one or more of these conditions.
The selectivity of the theory is therefore not merely a limitation of the
framework, but an intrinsic aspect of its structural scope.

We make this filter explicit by formulating a structural criterion
characterising when a nondegenerate finite-depth APT structure can arise.
The criterion isolates three features: the presence of nondegenerate phase
duality, compatibility of admissible dynamics with phase interaction, and
finite or terminating defect propagation.
Within the framework considered here, these conditions are jointly necessary
and sufficient.
When they are satisfied, the resulting phase structure exhibits strong rigidity
properties; when one of the conditions fails, the associated domain falls
outside the intended finite-depth APT setting.
The formulation is deliberately agnostic to analytic realisation and applies
equally to algebraic domains whose interaction data satisfy the same
structural constraints.

As a consequence, a range of phenomena often regarded as exceptional or highly
specialised (including Fourier diagonalisation, rigidity of stabiliser codes,
Bethe-type exact solvability, and uniqueness phenomena associated with certain
canonical representations) can be interpreted as natural manifestations of
the same underlying structural constraints
\cite{Weil1964SurCertains,Howe1979Heisenberg,Gowers2001Fourier,GreenTao2008Inverse}.
Conversely, generic nonlinear, chaotic, or metric-dependent settings typically
fail to satisfy the compatibility conditions required by the finite-depth APT
framework.

This work positions Algebraic Phase Theory not as a universal modelling
framework, but as a structural framework for analysing rigidity, defect
propagation, and phase organisation in domains satisfying the applicability
criterion.
It clarifies both the scope and the limitations of the APT programme,
providing a principled criterion for applicability while helping distinguish
domains naturally compatible with finite-depth phase-theoretic structure from
those lying outside its intended scope.

\section{Structural Preliminaries and Organisational Principles}

The terminology of Algebraic Phase Theory reflects the interpretation of
phase interaction as an organising principle for admissible observables and
their defect propagation. In concrete settings, the resulting structures
specialise to familiar phenomena including Fourier-type decompositions,
Heisenberg-type interaction laws, stabiliser frameworks, and finite-depth
commutator filtrations. The present paper does not require the detailed
construction of these examples; only the abstract structural features isolated
below are used.

Before formulating the Structural Applicability Criterion, it is conceptually
necessary to distinguish two independent ways in which any extracted phase
structure is organised. On the one hand, the APT phase object $\mathcal{P}$ is
equipped with a canonical filtration by defect depth,
\[
\mathcal{P}_0 \subseteq \mathcal{P}_1 \subseteq \cdots \subseteq \mathcal{P}_d,
\]
whenever defect propagation terminates. This filtration measures rigidity and
the accumulation of interaction defects, and governs boundaries, rigidity
islands, and obstruction phenomena. On the other hand, whenever Phase Duality is
present, the phase object $\mathcal{P}$ admits a dual object
$\widehat{\mathcal{P}}$, which canonically indexes a decomposition of admissible
observables according to their phase response. This decomposition controls
representation-theoretic behaviour, diagonalisation of dynamics, and
Fourier-type structure.

These two organisational principles encode distinct structural information:
defect strata measure interaction complexity, while phase labels measure
transformation behaviour. In general, neither organisational structure is
canonically determined by the other, and they should be regarded as
a priori independent features of phase-theoretic structure.

\begin{proposition}
\label{prop:independent-partitions}
Let $(\mathcal P,\circ)$ be a phase structure equipped with a defect filtration
\[
\mathcal P_0 \subseteq \mathcal P_1 \subseteq \cdots \subseteq \mathcal P
\]
and a nondegenerate phase pairing with dual object $\widehat{\mathcal P}$. Then
$\mathcal P$ admits two canonical and a priori independent organisational
structures.

First, the filtration $(\mathcal P_k)$ stratifies $\mathcal P$ by defect depth,
measuring the propagation of interaction defects. Second, the phase pairing
induces a canonical partition of $\mathcal P$ into phase-response classes,
defined as the fibres of the phase-response profile map
\[
\Phi : \mathcal P \longrightarrow \mathbb{T}^{\widehat{\mathcal P}},
\qquad
\Phi(p) := \big(\langle p,\chi\rangle\big)_{\chi\in\widehat{\mathcal P}}.
\]
Equivalently, two elements $p,q\in\mathcal P$ lie in the same phase-response class
if and only if
\[
\langle p,\chi\rangle = \langle q,\chi\rangle
\quad \text{for all } \chi\in\widehat{\mathcal P}.
\]

These two organisational structures encode distinct structural information: the
defect filtration records interaction complexity, while phase-response classes
record transformation behaviour under dual probes. In the absence of additional structural hypotheses linking defect propagation
and phase response, neither organisation canonically determines the other,
nor is there a functorial mechanism relating them in general.
\end{proposition}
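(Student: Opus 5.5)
The proposition has two kinds of content. One part is definitional and verifiable: $\Phi$ is well defined and its fibres give an equivalence relation. The other part, "a priori independent", is a non-implication claim that must be shown by exhibiting examples. I would handle them in that order.

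\textbf{Canonicity.} First I check that the defect filtration is a chain of subsets indexed by depth. Setting $\operatorname{depth}(p):=\min\{k : p\in\mathcal P_k\}$ (defined on $\bigcup_k\mathcal P_k$) assigns each element its stratum $\mathcal P_k\setminus\mathcal P_{k-1}$. This is canonical because it uses only the given filtration.

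Next I define $\Phi$ by the displayed formula. The relation $p\sim q \iff \Phi(p)=\Phi(q)$ is the kernel relation of a map, so it is automatically reflexive, symmetric and transitive. Its classes are exactly the fibres $\Phi^{-1}(\Phi(p))$, which gives the claimed equivalence with the pointwise condition $\langle p,\chi\rangle=\langle q,\chi\rangle$ for all $\chi$. Canonicity follows because $\Phi$ depends only on the pairing and $\widehat{\mathcal P}$.

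I would also record what nondegeneracy adds. If the pairing is multiplicative in the first variable, the class of $p$ is the coset $p\cdot\ker\Phi$. Nondegeneracy then forces $\ker\Phi$ to be trivial, so the classes are singletons. This is worth noting because it is the main source of any apparent "relation" between the two structures.

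\textbf{Independence.} Here I would exhibit pairs of phase structures that agree in one organisation and differ in the other. The natural setting is finite abelian groups with the character pairing, and finite Heisenberg-type groups.

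In the first direction, I keep the phase data fixed and vary the filtration. Take $\mathcal P=\mathbb Z/n$ with the nondegenerate pairing $\langle a,\chi_b\rangle=e^{2\pi i ab/n}$. Equip it once with the trivial filtration $\mathcal P_0=\mathcal P$, and once with a nontrivial chain $\mathcal P_0=\{0\}\subseteq\mathcal P_1=\mathcal P$, or more generally a chain of subgroups. The phase-response partition is identical in both cases, while the stratifications differ. So the phase data cannot determine the filtration.

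In the second direction, I keep the filtration fixed and vary the phase data. Take two distinct pairings on the same filtered object, for instance $\chi_b$ versus $\chi_{\sigma(b)}$ for a non-automorphic relabelling, or a degenerate versus nondegenerate restriction on a stratum. The filtration is unchanged while the profiles change in a way not expressible through depth.

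Finally, I would rule out a functorial relation. Consider the category of phase structures with morphisms preserving both the filtration and the pairing. A functor recovering one organisation from the other would have to be constant on the examples above. But those examples are objects with isomorphic underlying data in one organisation and non-isomorphic data in the other, which is a contradiction.

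\textbf{Main obstacle.} The hard step is making "neither canonically determines the other" precise enough to prove. The proposition leaves the ambient axioms on $\circ$ and on admissible filtrations unspecified. So the real work is checking that the counterexample filtrations are admissible defect filtrations, for example commutator-type filtrations. With the lower central series as the filtration, I would try Heisenberg groups over $\mathbb F_p$ of different depth sharing the same abelianised character data. I would also make sure the nondegeneracy caveat above, where classes become singletons, does not trivialise the comparison. To avoid that, I would compare induced partitions on quotients $\mathcal P_k/\mathcal P_{k-1}$ rather than on $\mathcal P$ itself.
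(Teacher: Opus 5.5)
Your canonicity half (depth as the least index, and phase-response classes as the fibres of $\Phi$, i.e.\ the kernel relation of a map) is exactly the paper's argument. For independence, however, the paper builds no examples. It only argues that the filtration comes from the defect calculus of $\circ$ and $\Phi$ from the pairing, so that absent a linking hypothesis nothing forces one to determine the other. Your attempt to turn this into genuine non-implications is where the gap lies.

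\emph{Filtration fixed, pairing varied.} The relabelling $\chi_b\mapsto\chi_{\sigma(b)}$ gives $\Phi'(a)(\chi_b)=\Phi(a)(\chi_{\sigma(b)})$. Since $\sigma$ is a bijection, $\Phi'(a)=\Phi'(a')$ iff $\Phi(a)=\Phi(a')$, so the phase-response partition does not change at all. The ``degenerate restriction'' alternative either violates the nondegeneracy hypothesis or concerns a stratum rather than the partition of $\mathcal P$. In fact your own singleton remark shows that no example of this shape can exist with the pairings the paper actually uses. Multiplicativity is exactly what makes $\circ$ well defined in the proof of Theorem~\ref{thm:applicability}, and the paper later states that the pairing separates points. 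With multiplicativity the partition is the discrete one for every filtration, so it is trivially ``determined''. To witness this direction you would need a pairing that is nondegenerate only in the identity-detecting sense but not multiplicative. Alternatively, you must reformulate independence in terms of the values of $\Phi$ rather than its fibres. Passing to $\mathcal P_k/\mathcal P_{k-1}$ changes the statement rather than proving it.

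\emph{Pairing fixed, filtration varied.} This direction has the mirror-image problem. The $\mathbb Z/n$ example with $\{0\}\subseteq\mathcal P$ is valid only if a defect filtration may be arbitrary extra data; in that case it is a sharper argument than the paper's. It is not of commutator type, since $\mathbb Z/n$ is abelian and would force $\mathcal P_0=\mathcal P$. Your proposed Heisenberg repair cannot work. A pairing multiplicative in the first variable with values in the abelian group $\mathbb T$ satisfies $\langle [p,q],\chi\rangle=1$ for all $\chi$. Hence the centre $[H,H]$ of the Heisenberg group over $\mathbb F_p$ is invisible to every probe, and nondegeneracy fails. More generally, when $\circ$ is a group law, nondegeneracy plus multiplicativity forces $[p,q]=e$ for all $p,q$. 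So $(\mathcal P,\circ)$ is abelian and any commutator-generated filtration is trivial. Under the intrinsic reading the pairing \emph{does} determine the filtration, and no counterexample exists.

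As proposed, then, the independence half is not established. It goes through only with filtrations treated as arbitrary data, and, for multiplicative pairings, only in the direction ``pairing does not determine filtration''. Your functoriality paragraph merely repackages the examples and inherits the same limitation.
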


\begin{proof}
By assumption, the phase structure $(\mathcal P,\circ)$ is equipped with a defect
filtration
\[
\mathcal P_0 \subseteq \mathcal P_1 \subseteq \cdots \subseteq \mathcal P,
\]
which stratifies $\mathcal P$ by defect depth. Concretely, one may regard the
\emph{defect depth} of an element $p\in\mathcal P$ as the least index
\[
\mathrm{depth}(p) := \min\{\,k \ge 0 : p \in \mathcal P_k\,\},
\]
when such a minimum exists. In any case, the nested family
$(\mathcal P_k)_{k\ge 0}$ provides a canonical organisation of $\mathcal P$ by
increasing interaction complexity, determined entirely by the interaction and
defect calculus underlying $\circ$.

By hypothesis, $\mathcal P$ also admits a nondegenerate phase pairing with dual
object $\widehat{\mathcal P}$,
\[
\langle\,\cdot\,,\,\cdot\,\rangle : \mathcal P \times \widehat{\mathcal P}
\longrightarrow \mathbb{T},
\]
where $\mathbb{T}$ denotes the abstract group of phases, written
multiplicatively. This pairing determines the \emph{phase-response profile} map
\[
\Phi : \mathcal P \longrightarrow \mathbb{T}^{\widehat{\mathcal P}},
\qquad
\Phi(p) := \big(\langle p,\chi\rangle\big)_{\chi\in\widehat{\mathcal P}}.
\]
The fibres of $\Phi$,
\[
\mathcal P[\alpha] := \Phi^{-1}(\alpha)
\qquad (\alpha \in \mathbb{T}^{\widehat{\mathcal P}}),
\]
define an equivalence relation on $\mathcal P$ and hence form a partition of
$\mathcal P$ into phase-response classes. Equivalently, two elements
$p,q\in\mathcal P$ lie in the same phase-response class if and only if
\[
\langle p,\chi\rangle = \langle q,\chi\rangle
\quad \text{for all } \chi\in\widehat{\mathcal P}.
\]

The two organisational structures are independent in the stated sense because
they arise from distinct and logically unrelated structural inputs. The defect
filtration $(\mathcal P_k)$ is defined solely from the interaction and defect
calculus underlying $\circ$ and records the propagation of interaction
complexity. In contrast, the phase-response profile map $\Phi$ is defined solely
from the phase pairing and records transformation behaviour under dual phase
probes. Absent additional hypotheses linking defect propagation to phase
response, there is no canonical or functorial relation forcing membership in a
given filtration level $\mathcal P_k$ to determine the phase-response profile
$\Phi(p)$, nor conversely. This establishes the claimed a priori independence of
the two organisational structures.
\end{proof}

\begin{remark}
Throughout, the terms \emph{admissible observables} and \emph{admissible dynamics}
refer to those determined intrinsically and functorially by the interaction
structure of the domain $D$. No restriction, truncation, or enlargement of these
classes is permitted unless it is forced by that intrinsic structure.

When Phase Duality is invoked, the associated phase pairing takes values in an
abstract abelian group of phase values, denoted $\mathbb{T}$. This group is
introduced purely algebraically and carries no analytic or topological structure
a priori. In analytic and physical realisations, $\mathbb{T}$ is canonically
realised as the circle group of unit complex numbers, but no such realisation is
assumed in the abstract theory.
\end{remark}

\begin{theorem}
\label{thm:applicability}
Let $D$ be a mathematical or physical domain equipped with a notion of
composition, interaction, or evolution.
Then $D$ admits a non-artificial Algebraic Phase Theory structure within the
framework considered here if and only if the following conditions hold:
\begin{enumerate}
  \item \textbf{Phase Duality.}
  Observables in $D$ admit a nondegenerate phase pairing, giving rise to a
  canonical dual object (for example, characters, phases, or Fourier labels).

  \item \textbf{Symmetry Compatibility.}
  Admissible dynamics normalise phase interaction and preserve commutation
  relations.

  \item \textbf{Finite Termination.}
  Defect or commutator propagation terminates after finitely many steps, or is
  canonically controlled by a finite filtration.
\end{enumerate}
Within the present framework, these conditions are jointly necessary and sufficient.
\end{theorem}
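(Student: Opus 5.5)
The plan is to read the theorem as a statement about the axioms defining a \emph{non-artificial} finite-depth APT structure, and to prove the two implications separately. Non-artificial means that the phase object, pairing, admissible dynamics and filtration are all determined intrinsically and functorially by the interaction structure of $D$, as in the Remark preceding the theorem. We fix the framework as follows. An APT structure on $D$ is a phase structure $(\mathcal P,\circ)$ extracted canonically from $D$, together with three pieces of data: a nondegenerate pairing $\langle\cdot,\cdot\rangle:\mathcal P\times\widehat{\mathcal P}\to\mathbb T$; an action of the admissible dynamics on $\mathcal P$ by automorphisms of $\circ$; and a defect filtration $\mathcal P_0\subseteq\cdots\subseteq\mathcal P_d=\mathcal P$ of finite length. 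With the framework fixed this way, both directions are largely a matter of unpacking definitions, and the only real content lies in the canonicity clauses.

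For necessity, we assume such a structure exists and extract each condition in turn.
\begin{enumerate}
\item \emph{Phase Duality.} The pairing is nondegenerate, so by Proposition~\ref{prop:independent-partitions} the profile map $\Phi$ is injective on phase-response classes. The object $\widehat{\mathcal P}$ is then a canonical dual, because it is produced intrinsically and not chosen.
\item \emph{Symmetry Compatibility.} Admissible dynamics act by automorphisms of $\circ$. They therefore normalise the commutator/interaction map $(p,q)\mapsto p\circ q\circ\ldots$, and consequently they preserve commutation relations.
\item \emph{Finite Termination.} The filtration has finite length $d$, so defect propagation terminates after at most $d$ steps.
\end{enumerate}
For non-artificiality we must also check that none of these data could have been imposed by truncation or enlargement. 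This is exactly what the Remark forbids, so each condition holds intrinsically rather than merely for some auxiliary choice.

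For sufficiency, we reverse the process and build the structure from the three conditions, in the following order.
\begin{enumerate}
\item Take $\mathcal P$ to be the observables modulo the kernel of the pairing given by Phase Duality. Nondegeneracy makes this quotient trivial on the canonical level, and $\widehat{\mathcal P}$ is the dual object.
\item Define $\mathcal P_k$ inductively as the elements whose iterated defects (commutators with $\mathcal P$) land in $\mathcal P_{k-1}$, starting from $\mathcal P_0$, the phase-central elements.
\item Use Finite Termination to conclude that this chain stabilises at some finite $d$.
\item Use Symmetry Compatibility to show that each $\mathcal P_k$ is stable under admissible dynamics, by induction on $k$: an automorphism preserving commutation relations maps iterated defects to iterated defects.
\end{enumerate}
Since every ingredient is defined from the interaction data of $D$ alone, the resulting structure is functorial, and hence non-artificial.

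I expect the main obstacle to be the canonicity and independence issues rather than any computation. The condition ``canonically controlled by a finite filtration'' must be shown to coincide with the intrinsic defect filtration constructed in step 2; otherwise sufficiency could yield an artificial structure. Proposition~\ref{prop:independent-partitions} warns that the filtration and the phase labels are a priori independent, so we cannot deduce termination from duality. The first thing I would try is to prove that any finite filtration compatible with the dynamics refines, or is refined by, the intrinsic one, so that finiteness transfers between them. Failing that, I would build canonical control into the definition of the framework, in which case the theorem becomes a characterisation by definition.
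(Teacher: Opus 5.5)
Your plan is essentially the paper's proof. Necessity is read off from the defining data of a non-artificial APT structure (nondegeneracy, a functorial action preserving $\circ$ and $\delta$, and finite depth are all axioms there too). Sufficiency takes $\mathcal P$ to be observables modulo the pairing, builds a commutator filtration starting from the centre, and invokes Finite Termination and Symmetry Compatibility. The canonicity issue you flag is settled exactly by your fallback: the paper simply asserts that Finite Termination gives $\mathcal P_d=\mathcal P$.

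The one difference is in your favour. Your ascending filtration ($p\in\mathcal P_k$ iff $[p,q]\in\mathcal P_{k-1}$ for all $q$) is the upper-central-series form, for which Finite Termination, read as nilpotency, really yields $\mathcal P_d=\mathcal P$. The paper's recursion $\mathcal P_{k+1}=\operatorname{Subphase}(\mathcal P_k\cup\{[p,q]:p\in\mathcal P_k,\ q\in\mathcal P\})$, started at the centre, only adjoins trivial commutators and so stalls at $\mathcal P_0$.
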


\begin{proof}
We prove the equivalence by showing both directions. Let $D$ be a domain equipped
with a notion of composition/interaction, written
\[
\star : \mathsf{Obs}(D)\times \mathsf{Obs}(D)\to \mathsf{Obs}(D),
\]
where $\mathsf{Obs}(D)$ denotes the class of admissible observables under
consideration (chosen intrinsically inside $D$).
Assume also that there is a specified class of admissible dynamics/symmetries
$\mathsf{Dyn}(D)$ acting on $\mathsf{Obs}(D)$ by maps
\[
g:\mathsf{Obs}(D)\to \mathsf{Obs}(D)\qquad (g\in \mathsf{Dyn}(D)).
\]

Define the commutator (or interaction defect) associated to $\star$ by
\[
[a,b]_\star := a\star b \star (b\star a)^{-1},
\]
whenever inversion is meaningful in the ambient interaction calculus. In purely algebraic settings one may instead work with a chosen defect operator
$\delta(a,b)$ satisfying $\delta(a,b)=e$ if and only if $a$ and $b$ interact
rigidly. This abstract defect formalism provides the interaction framework used in the
present setting. For definiteness we write $[a,b]$ for the
defect/commutator.

\medskip
\noindent $(\Rightarrow)$\;
Assume that $D$ admits a non-artificial Algebraic Phase Theory structure.
By definition, this means there exists an algebraic phase object
\[
(\mathcal P,\circ)
\]
extracted intrinsically from $D$, where $\circ$ encodes phase interaction and
$\delta:\mathcal P\to \mathbb{Z}_{\ge 0}$ is the intrinsic defect degree,
inducing the canonical filtration
\[
\mathcal P_0\subseteq \mathcal P_1\subseteq \cdots \subseteq \mathcal P_d = \mathcal P,
\qquad
\mathcal P_k:=\{p\in \mathcal P:\delta(p)\le k\},
\]
with finite termination $d<\infty$. The existence and functoriality of such data constitute the structural
axioms assumed throughout the present framework.

\smallskip
\noindent
\emph{(1) Phase Duality.}
An APT structure is, by construction, a phase framework: it comes equipped with
a faithful phase pairing detecting interaction data. Concretely, in the APT
extraction one obtains a canonical dual object $\widehat{\mathcal P}$ together
with a pairing
\[
\langle\cdot,\cdot\rangle : \mathcal P \times \widehat{\mathcal P} \to \mathbb{T}.
\]
This pairing is required to be nondegenerate in the sense that
\[
\big(\forall \chi \in \widehat{\mathcal P} \; \langle p,\chi\rangle = 1\big)
\ \Longrightarrow\ p = e,
\qquad
\big(\forall p \in \mathcal P \; \langle p,\chi\rangle = 1\big)
\ \Longrightarrow\ \chi = \mathbf{1}.
\]

To see why nondegeneracy is necessary, suppose that it fails.
Then one of the following two pathologies must occur.

\smallskip
\noindent
Consider degeneracy in $\mathcal P$. There exists a nontrivial phase element
$p \neq e$ such that
\[
\langle p,\chi\rangle = 1 \quad \text{for all } \chi \in \widehat{\mathcal P}.
\]
In this case, $p$ is nontrivial but completely invisible to all dual probes.
Distinct interaction behaviour is therefore collapsed by the phase extraction.

\smallskip
\noindent
Consider degeneracy in $\widehat{\mathcal P}$. There exists a nontrivial dual
element $\chi \neq \mathbf{1}$ such that
\[
\langle p,\chi\rangle = 1 \quad \text{for all } p \in \mathcal P.
\]
In this case, $\chi$ carries no interaction information and cannot distinguish
any phase elements.

\smallskip
In either situation, the pairing fails to separate intrinsic interaction data.
This contradicts the requirement that the APT structure be non-artificial.
Hence Phase Duality is necessary.

\smallskip
\noindent
\emph{(2) Symmetry Compatibility.}
Since the APT extraction is functorial, every admissible symmetry
$g\in \mathsf{Dyn}(D)$ induces a phase morphism
\[
g_\#:\mathcal P\to \mathcal P
\]
preserving interaction and defect. In particular,
\[
g_\#(p\circ q)=g_\#(p)\circ g_\#(q),
\qquad
\delta(g_\#(p))=\delta(p).
\]
This realises the required symmetry compatibility condition.

\smallskip
\noindent
\emph{(3) Finite Termination.}
Finite termination is an explicit axiom of the APT framework: every phase has
bounded defect depth. Thus finite termination is necessary.

\medskip
\noindent
$(\Leftarrow)$\;
Assume now that $D$ satisfies the three stated conditions. We show that these conditions are sufficient to construct a non-artificial Algebraic Phase Theory
structure by canonically constructing the phase object and observing that it satisfies
Axioms~I--V.

\smallskip
\noindent
\emph{(1) Canonical phase carrier and interaction.}
Define the phase carrier $\mathcal P$ by identifying observables that are
indistinguishable under the phase pairing. Define an equivalence relation $\sim$
on $\mathsf{Obs}(D)$ by
\[
a\sim b
\quad\Longleftrightarrow\quad
\langle a,\chi\rangle = \langle b,\chi\rangle
\ \text{for all } \chi\in\widehat{\mathcal P}.
\]
Let
\[
\mathcal P := \mathsf{Obs}(D)/{\sim},
\qquad
[a]\in \mathcal P.
\]
Define the interaction law $\circ$ on $\mathcal P$ by
\[
[a]\circ[b] := [a\star b].
\]
This is well defined: if $a\sim a'$ and $b\sim b'$, then for every
$\chi\in\widehat{\mathcal P}$ compatibility of the phase pairing with
$\star$ gives
\[
\langle a\star b,\chi\rangle=\langle a,\chi\rangle\,\langle b,\chi\rangle
=\langle a',\chi\rangle\,\langle b',\chi\rangle=\langle a'\star b',\chi\rangle,
\]
so $a\star b\sim a'\star b'$.

\smallskip
\noindent
\emph{(2) Defect operator and filtration.}
Transport the defect to $\mathcal P$ by
\[
[[a],[b]] := [[a,b]]\in \mathcal P,
\]
and define the filtration $(\mathcal P_k)_{k\ge 0}$ by
\[
\mathcal P_0 := \{p\in \mathcal P : [[p],[q]]=e \text{ for all } q\in \mathcal P\},
\]
\[
\mathcal P_{k+1}
:= \operatorname{Subphase}\!\left(
\mathcal P_k \cup \{ [[p],[q]] : p\in \mathcal P_k,\ q\in \mathcal P\}
\right),
\]

\smallskip
\noindent
\emph{(3) Finite defect degree.}
Define $\delta(p):=\min\{k:p\in \mathcal P_k\}$. By finite termination,
$\mathcal P_d=\mathcal P$ for some $d<\infty$.

\smallskip
\noindent
\emph{(4) Functoriality.}
Each $g\in\mathsf{Dyn}(D)$ descends to
\[
g_\#:\mathcal P\to \mathcal P,
\qquad
g_\#([a])=[g(a)],
\]
preserving interaction and filtration.

\smallskip
\noindent
\emph{(5) Finite Termination.}
Thus $(\mathcal P,\circ)$ satisfies Axioms~I--V of Algebraic Phase Theory and is
non-artificial.

\medskip
Combining the two directions proves the theorem.
\end{proof}

\begin{corollary}
\label{cor:algebraic-applicability}
Let $D$ be a purely algebraic domain, in the sense that its admissible
observables and admissible dynamics are determined canonically from an intrinsic
interaction law $\star$, with no topological, metric, or analytic choices.
If $D$ satisfies the three conditions of Theorem~\ref{thm:applicability},
namely Phase Duality, Symmetry Compatibility, and Finite Termination,
then $D$ admits a non-artificial Algebraic Phase Theory structure.
In particular, Algebraic Phase Theory applies naturally to algebraic
phenomena within the same structural framework used for analytic
realisations.
\end{corollary}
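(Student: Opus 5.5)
The plan is to obtain Corollary~\ref{cor:algebraic-applicability} as a direct specialisation of the sufficiency direction $(\Leftarrow)$ of Theorem~\ref{thm:applicability}. That argument only uses three inputs: the interaction law $\star$ on $\mathsf{Obs}(D)$, the phase pairing, and the action of $\mathsf{Dyn}(D)$. At no point does it use topological, metric or analytic structure. So the task is to check that a purely algebraic domain supplies exactly these inputs, canonically, and then to run the same construction.

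First I will set up the data. Since $D$ is purely algebraic, $\mathsf{Obs}(D)$ and $\mathsf{Dyn}(D)$ are determined canonically by $\star$, which is precisely the intrinsic choice demanded in the Remark preceding Theorem~\ref{thm:applicability}. Phase Duality then gives a nondegenerate pairing with values in the abstract group $\mathbb{T}$. By that same Remark, $\mathbb{T}$ is purely algebraic and needs no circle-group realisation, so nothing analytic enters at this stage.

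Second, I will repeat steps (1)--(5) of the $(\Leftarrow)$ direction. I form $\mathcal P=\mathsf{Obs}(D)/{\sim}$ and check that $[a]\circ[b]=[a\star b]$ is well defined, using multiplicativity of the pairing in $\star$. I transport the defect, using the algebraic defect operator $\delta(a,b)$ in place of the commutator when inversion is unavailable, and build the filtration $\mathcal P_k$ by iterated subphase generation. Finite Termination gives $\mathcal P_d=\mathcal P$. Symmetry Compatibility ensures that each $g\in\mathsf{Dyn}(D)$ descends to $g_\#$ preserving $\circ$ and the $\mathcal P_k$.

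Non-artificiality follows from two facts. Every ingredient is canonical in $\star$. The resulting pairing on $\mathcal P$ is nondegenerate on the $\mathcal P$ side by construction of $\sim$, and on the dual side by Phase Duality.

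The main obstacle is justifying that the pairing is compatible with $\star$, that is, $\langle a\star b,\chi\rangle=\langle a,\chi\rangle\langle b,\chi\rangle$, in a setting with no analytic characters to appeal to. My first approach is to read this compatibility as part of the content of Phase Duality, which requires the pairing to be a phase pairing detecting interaction data. I will state explicitly that this is the hypothesis being used, so that the well-definedness step goes through verbatim.

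The closing sentence of the corollary, that APT applies in the same framework as for analytic realisations, then follows by observing that the construction is identical in both cases.
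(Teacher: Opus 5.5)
Your proposal is correct and follows the paper's route. The paper also obtains the corollary from the sufficiency direction of Theorem~\ref{thm:applicability}, noting that the three hypotheses and the resulting $(\mathcal P,\circ)$, its defect filtration, and the induced maps $g_\#$ depend only on the intrinsic data $(\star,\mathsf{Dyn}(D))$, and it takes non-artificiality from exactly that. The differences are presentational: the paper simply invokes the theorem instead of re-running steps (1)--(5), and it uses the compatibility $\langle a\star b,\chi\rangle=\langle a,\chi\rangle\langle b,\chi\rangle$ tacitly, whereas you state explicitly that it is being read into Phase Duality.
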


\begin{proof}
Let $D$ be a purely algebraic domain equipped with an intrinsic interaction
\[
\star : \mathsf{Obs}(D)\times \mathsf{Obs}(D)\to \mathsf{Obs}(D),
\]
and suppose that the classes of admissible observables $\mathsf{Obs}(D)$ and
admissible dynamics $\mathsf{Dyn}(D)$ are determined functorially by $\star$.
Assume that $D$ satisfies Phase Duality, Symmetry Compatibility, and Finite
Termination.

These conditions are formulated entirely in terms of the intrinsic algebraic
data $(\star,\mathsf{Dyn}(D))$ and do not involve any topological, metric, or
analytic structure. In particular, all notions of interaction, commutation, and
defect propagation are intrinsic to $D$. By Theorem~\ref{thm:applicability}, these hypotheses force the existence of a
canonically defined phase object
\[
(\mathcal P,\circ),
\]
equipped with a finite defect filtration
\[
\mathcal P_0 \subseteq \mathcal P_1 \subseteq \cdots \subseteq \mathcal P_d
= \mathcal P,
\qquad d<\infty,
\]
and a functorial action of admissible dynamics
\[
g\in\mathsf{Dyn}(D)\;\Longrightarrow\; g_\#:\mathcal P\to \mathcal P
\]
preserving interaction and defect degree. The construction of $(\mathcal
P,\circ)$ depends only on the intrinsic interaction law $\star$ and the induced
symmetry action, and is therefore non-artificial. Consequently, $D$ admits an Algebraic Phase Theory structure independently of any
analytic realisation.
\end{proof}

\begin{remark}
Although Algebraic Phase Theory was historically motivated by analytic and
quantum-mechanical examples, its scope is structural rather than analytic.
The Structural Applicability Criterion is not introduced as an external
assumption, but arises naturally from the axioms governing admissible phase
interaction.

Whenever the criterion is satisfied, the resulting phase structure exhibits
strong rigidity properties, regardless of whether the underlying domain admits
an analytic realisation. Conversely, failure of one of the stated conditions
obstructs the existence of a non-artificial APT structure within the present
framework.
\end{remark}

\section{Forced Phase-Theoretic Structure}

Domains satisfying Theorem~\ref{thm:applicability} exhibit strong structural
constraints governed by admissible phase data and its finite
defect filtration. Once phase duality, symmetry compatibility, and finite
termination are present, the resulting phase structure exhibits strong rigidity properties in the
organization of observables, dynamics, and equivalence.

\begin{corollary}
Let $D$ be a domain satisfying the Structural Applicability Criterion of
Theorem~\ref{thm:applicability}. Then $D$ admits the following structural features:
\begin{itemize}
  \item character or Fourier-type decompositions of admissible observables,
  \item canonical factorisation of admissible dynamics through phase response,
  \item evolution generated by symmetry action on phase data,
  \item collapse of weak or Morita-type equivalence to strong structural
        equivalence,
  \item rigid or error-invisible substructures,
  \item finite rigidity islands with finitely controlled interaction.
\end{itemize}
\end{corollary}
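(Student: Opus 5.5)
The plan is to read off each listed feature from the data produced in the $(\Leftarrow)$ direction of Theorem~\ref{thm:applicability}. That data is the phase carrier $\mathcal P=\mathsf{Obs}(D)/{\sim}$ with interaction $\circ$, the nondegenerate pairing $\langle\cdot,\cdot\rangle:\mathcal P\times\widehat{\mathcal P}\to\mathbb{T}$, the finite filtration $\mathcal P_0\subseteq\cdots\subseteq\mathcal P_d=\mathcal P$, and the induced phase morphisms $g_\#$. Proposition~\ref{prop:independent-partitions} supplies the two organisational structures: the phase-response classes (fibres of $\Phi$) and the defect strata. I will derive the first three features from the phase-response organisation and the last three from the defect organisation, with Symmetry Compatibility linking them.

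\emph{Phase-response features.} For the character decomposition, I attach to each admissible observable $a$ its class $[a]$ and its profile $\Phi([a])$. By the multiplicativity $\langle a\star b,\chi\rangle=\langle a,\chi\rangle\langle b,\chi\rangle$ used in the theorem, each $\chi\in\widehat{\mathcal P}$ is a character of $(\mathcal P,\circ)$. The fibres $\mathsf{Obs}(D)[\chi]$, defined by phase response, then index a decomposition of observables by $\widehat{\mathcal P}$. Nondegeneracy in $\mathcal P$ makes $\Phi$ injective, so this labelling is faithful.

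For the factorisation of dynamics, I define the dual action $\widehat g(\chi):=\chi\circ g_\#^{-1}$. Since $g_\#$ preserves $\circ$, the map $\widehat g$ sends characters to characters. I then check $\Phi(g_\#p)=\widehat g^{*}\Phi(p)$. Hence every $g\in\mathsf{Dyn}(D)$ factors through its action on phase response, and evolution is generated by the induced symmetry action on $\widehat{\mathcal P}$.

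\emph{Defect features.} I take the rigid, error-invisible substructure to be $\mathcal P_0$. Its elements have trivial defect with everything, and by $\delta(g_\#p)=\delta(p)$ it is stable under all dynamics. It is therefore invisible to the defect calculus. I take the finite rigidity islands to be the successive strata $\mathcal P_{k+1}\setminus\mathcal P_k$. Finite Termination guarantees there are at most $d+1$ of them, and interaction between strata is controlled by the recursive definition of $\mathcal P_{k+1}$.

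\emph{Equivalence collapse (main obstacle).} This is the step I expect to be hardest, because ``weak'' or ``Morita-type'' equivalence is not defined anywhere in the paper. My plan is to adopt the following definition. Two domains satisfying the criterion are weakly equivalent if they have equivalent categories of phase-graded observables, that is, observables graded by $\widehat{\mathcal P}$ with compatible dynamics. I would then argue in three steps:
\begin{enumerate}
\item Such an equivalence recovers $\widehat{\mathcal P}$ as the set of grading labels.
\item Nondegeneracy of the pairing in both variables then recovers $\mathcal P$ via $\Phi$.
\item Finite Termination, by induction on $k$, recovers the filtration and $\circ$ up to isomorphism.
\end{enumerate}
This would yield a strong equivalence of phase objects. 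If this reconstruction fails in general, I would fall back to stating the collapse under the explicit additional hypothesis that $\widehat{\widehat{\mathcal P}}\cong\mathcal P$ canonically.
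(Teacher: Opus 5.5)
Your treatment of the phase-response features follows the paper's proof. So does your rigid core $\mathcal P_0$, which is stable under $g_\#$ because $\delta(g_\#p)=\delta(p)$. The equivalence-collapse step, however, has a genuine gap.

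\textbf{Equivalence collapse.} Step 1 fails as stated. A bare equivalence of categories of $\widehat{\mathcal P}$-graded objects recovers at most the label set up to bijection. For instance, vector spaces graded by any two label sets of the same cardinality already form equivalent categories. It does not recover the group structure or the pairing, so step 2 has nothing to apply nondegeneracy to. Even a transported pairing only embeds $\mathcal P$ into $\mathbb T^{\widehat{\mathcal P}}$ without identifying the image. The fallback $\widehat{\widehat{\mathcal P}}\cong\mathcal P$ adds a hypothesis to the statement and still presupposes step 1.

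The idea you are missing is that no reconstruction from dual data is needed. The paper takes the relevant equivalence to be a map $F:\mathcal P\to\mathcal P'$ preserving admissible interaction, hence commutators. Now $\mathcal P_0$ (the centre) and $\mathcal P_{k+1}$ (the subphase generated by $\mathcal P_k$ and its defects) are defined purely from $\circ$ and the commutator. So induction on $k$ gives $F(\mathcal P_k)=\mathcal P'_k$ for all $k\le d$, and the collapse to strong equivalence is read off from that.

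\textbf{Rigidity islands.} This step is also off target. The strata $\mathcal P_{k+1}\setminus\mathcal P_k$ are not subphases: they omit $e$, are not closed under $\circ$, and the defects of their elements land in other levels. They therefore lack exactly the closure that makes something an island with finitely controlled interaction, and ``controlled by the recursive definition'' does not supply it. In fact, with the multiplicativity you invoke, $\Phi$ is an injective homomorphism of $(\mathcal P,\circ)$ into the abelian group $\mathbb T^{\widehat{\mathcal P}}$. Hence commutator defects are trivial, $\mathcal P_0=\mathcal P$, and every one of your strata is empty. The paper instead takes maximal subphases $Q\subseteq\mathcal P$ closed under iterated defects. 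It then uses Finite Termination to bound the depth of all defect data generated inside $Q$.

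\textbf{Smaller points.}
\begin{enumerate}
\item The fibres of $\Phi$ are indexed by profiles in $\mathbb T^{\widehat{\mathcal P}}$, not by single labels $\chi$. A decomposition indexed by $\widehat{\mathcal P}$ needs the eigencomponent mechanism: the paper's $a\sim\sum_\chi a_\chi$, made precise in Proposition~\ref{prop:phase-response-decomp}.
\item Your $\widehat g(\chi)=\chi\circ g_\#^{-1}$ needs $g_\#$ invertible, and it gives $\Phi(g_\#p)=\Phi(p)\circ\widehat g^{-1}$. The transpose $\widehat g(\chi):=\chi\circ g_\#$ gives $\Phi(g_\#p)=\widehat g^{*}\Phi(p)$ without inverting $g_\#$.
\item Either way, you must assume $\widehat{\mathcal P}$ is stable under the induced map. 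Being a character of $(\mathcal P,\circ)$ does not by itself place $\chi\circ g_\#^{\pm1}$ in $\widehat{\mathcal P}$.
\end{enumerate}
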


\begin{proof}
Assume that $D$ satisfies the Structural Applicability Criterion. By
Theorem~\ref{thm:applicability}, $D$ admits a non-artificial Algebraic Phase
Theory structure $(\mathcal P,\circ)$ with finite defect filtration
\[
\mathcal P_0 \subseteq \mathcal P_1 \subseteq \cdots \subseteq \mathcal P_d = \mathcal P.
\]

\smallskip
\noindent
\emph{Phase decomposition.}
By Phase Duality, the phase carrier $\mathcal P$ admits a dual object
$\widehat{\mathcal P}$ together with a nondegenerate pairing
\[
\langle \,\cdot\,,\,\cdot\,\rangle : \mathcal P \times \widehat{\mathcal P}
\longrightarrow \mathbb{T},
\]
which separates elements of $\mathcal P$. Explicitly, for $p_1,p_2\in\mathcal P$,
\[
\langle p_1,\chi\rangle = \langle p_2,\chi\rangle
\ \text{for all } \chi\in\widehat{\mathcal P}
\quad\Longrightarrow\quad
p_1 = p_2.
\]

As a consequence, the phase pairing induces a canonical partition of the phase
carrier according to response to dual probes. For any admissible observable
$a$ in $D$, its associated phase class $[a]\in\mathcal P$ is therefore uniquely
determined by its phase response profile
\[
\big(\langle [a],\chi\rangle\big)_{\chi\in\widehat{\mathcal P}}.
\]
Equivalently, admissible observables admit a canonical resolution into
phase response components indexed by $\widehat{\mathcal P}$,
\[
a \;\sim\; \sum_{\chi\in\widehat{\mathcal P}} a_\chi,
\]
where each component $a_\chi$ is characterised by its response to the phase label
$\chi$.

This decomposition arises naturally from the structure of the phase pairing.
It arises from the same structural mechanism that produces Fourier
decompositions, character decompositions in representation theory, and spectral
resolutions of operators in analytic settings. In concrete realisations, the
phase response decomposition specialises to these familiar constructions and
recovers familiar classical decomposition theories most directly in semisimple regimes.

\smallskip
\noindent
\emph{Factorisation of dynamics through phase response.}
By Symmetry Compatibility, every admissible dynamic or symmetry
$g\in\mathsf{Dyn}(D)$ induces a phase morphism
\[
g_\#:\mathcal P\to \mathcal P
\]
preserving the interaction law and the defect filtration. Since the phase
pairing separates elements of $\mathcal P$, the action of $g_\#$ is determined at the level of phase-response profiles by
its effect on phase labels.

Concretely, if
\[
V = \bigoplus_{\chi\in\widehat{\mathcal P}} V_\chi
\]
denotes the canonical phase-response resolution of an admissible observable or
representation, then admissible dynamics preserve this resolution in the sense
that
\[
g(V_\chi) \subseteq V_{\chi'}
\]
for a uniquely determined phase label $\chi'$ depending on $\chi$ and $g$. Thus
admissible dynamics do not mix phase-response classes arbitrarily; instead,
their action factors through a well-defined transformation of phase labels,
together with symmetry-controlled action within each phase-response component.

\smallskip
\noindent
\emph{Symmetry generated evolution.}
By Symmetry Compatibility, each admissible dynamic $g\in\mathsf{Dyn}(D)$ induces
a morphism
\[
g_\#: \mathcal P \longrightarrow \mathcal P
\]
preserving phase interaction and the defect filtration. By Phase Duality, the
nondegenerate pairing
\[
\langle\,\cdot\,,\,\cdot\,\rangle :
\mathcal P \times \widehat{\mathcal P} \longrightarrow \mathbb{T}
\]
separates elements of $\mathcal P$, so the action of $g_\#$ is completely
determined by its induced action on dual phase labels. Equivalently, admissible
evolution factors through the induced action
\[
\widehat{g} : \widehat{\mathcal P} \longrightarrow \widehat{\mathcal P}.
\]
The phase pairing therefore induces a canonical and functorial partition of the
phase carrier, uniquely determined by response to dual phase labels. Thus admissible evolution is governed at the phase-theoretic level by symmetry
action on phase data, with the resulting dynamics constrained by the underlying
phase structure.

\smallskip
\noindent
\emph{Collapse of equivalence.}
By Finite Termination, the defect-induced filtration
\[
\mathcal P_0 \subseteq \mathcal P_1 \subseteq \cdots \subseteq \mathcal P_d
= \mathcal P
\]
stabilises after finite depth. In particular, every phase element has bounded
defect degree, and no infinite defect propagation is possible.

Let $F:\mathcal P\to \mathcal P'$ be an equivalence preserving admissible
interaction. Since $F$ respects the interaction law, it necessarily preserves
commutators and hence the defect operator. By induction on $k$, this implies
\[
F(\mathcal P_k)=\mathcal P_k' \qquad \text{for all } k\le d.
\]
Thus any equivalence preserving admissible interaction automatically preserves
the entire defect filtration.

Consequently, no additional continuous deformations or hidden extensions
compatible with the admissible interaction structure arise within the present
framework. Any weak, Morita-type, or derived equivalence compatible
with admissible interaction reduces to strong structural equivalence at the level of admissible phase data.

\smallskip
\noindent
\emph{Rigid and error-invisible substructures.}
By definition of the defect filtration,
\[
\mathcal P_0 := \{\,p\in\mathcal P : p\circ q = q\circ p \text{ for all } q\in\mathcal P\,\}
\]
consists of those phase elements that commute with the entire phase carrier.
Elements of $\mathcal P_0$ therefore form rigid cores of the phase structure.

In any admissible realisation of the phase structure, the action of elements of
$\mathcal P_0$ is trivial on admissible observables. Consequently, these rigid
cores give rise to substructures that are invariant under admissible dynamics
and stable under phase-sensitive perturbations within the present framework, yielding protected or error-invisible
sectors.

\smallskip
\noindent
\emph{Rigidity islands.}
Let $(\mathcal P,\circ)$ be a phase structure satisfying the Structural
Applicability Criterion, and let $Q\subseteq \mathcal P$ be a maximal subphase
with the property that defect propagation initiated within $Q$ remains
contained in $Q$. Equivalently, for all $p,q\in Q$, all iterated defects
generated by $p$ and $q$ lie in $Q$ and have bounded defect degree.

By Finite Termination, higher order defect data within $Q$ cannot generate new
independent phase elements beyond finite depth. Consequently, all higher order
interaction data in $Q$ factor through lower order interactions. Such subphases
therefore exhibit rigid behaviour in which interaction is governed by a finite 
set of phase relations. In particular, rigidity islands are regions of
the phase structure that are closed under interaction and do not generate additional independent interaction behaviour beyond finite
depth.

Since the defect filtration has finite depth, rigidity islands are governed by
finite-depth interaction behaviour. These subphases form the rigidity islands
of the phase structure.

\smallskip
All listed features follow directly from the existence of admissible phase
interaction together with symmetry compatibility and finite termination.
\end{proof}

Although the primary organisational principle in this paper is the defect
filtration, it is important to emphasise that Phase Duality carries independent
structural content. In particular, the phase-response partition induced by the
pairing with $\widehat{\mathcal P}$ is not merely a set-theoretic classification.
Whenever algebraic structure is present, this partition forces canonical
decompositions of admissible objects and controls how observables and dynamics
can act.

The following result records this fact in its most concrete form. It shows that
the phase-response partition gives rise to a commutative phase algebra whose
idempotents project admissible objects onto phase-response components. Classical 
decompositions such as Fourier, character, or spectral
decompositions therefore arise naturally from Phase Duality within the present
framework of Algebraic Phase Theory. This result is included to justify
treating phase response as a genuine structural axis, independent of defect
depth, and to clarify how familiar algebraic phenomena can emerge from the
phase-response partition.

\begin{proposition}
\label{prop:phase-response-decomp}
Assume the hypotheses of Theorem~\ref{thm:applicability}. Let $\mathcal P$ denote
the phase carrier, let $\widehat{\mathcal P}$ be its dual, and let
\[
\langle\,\cdot\,,\,\cdot\,\rangle :
\mathcal P \times \widehat{\mathcal P} \longrightarrow \mathbb{T}
\]
be the nondegenerate phase pairing. Let $\mathcal A_{\mathcal P}$ denote the
commutative phase algebra generated by phase observables. Then
$\mathcal A_{\mathcal P}$ is canonically identified with the algebra of
complex-valued functions on $\widehat{\mathcal P}$,
\[
\mathcal A_{\mathcal P} \;\cong\;
\mathrm{Fun}(\widehat{\mathcal P},\mathbb{C}),
\]
via evaluation on phase labels.

Then every admissible $\mathcal A_{\mathcal P}$-module $V$ decomposes canonically
as a direct sum of phase-response spaces
\[
V \;=\; \bigoplus_{\chi\in\widehat{\mathcal P}} V_\chi,
\qquad
V_\chi :=
\{\,v\in V : c\cdot v = c(\chi)\,v
\text{ for all } c\in\mathcal A_{\mathcal P}\,\}.
\]
Equivalently, writing $e_\chi\in\mathcal A_{\mathcal P}$ for the idempotent
corresponding to the delta function at $\chi$, one has
\[
V_\chi = e_\chi V,
\qquad
1 = \sum_{\chi\in\widehat{\mathcal P}} e_\chi,
\qquad
e_\chi e_{\chi'} = 0 \;\text{ for }\; \chi\neq \chi'.
\]
\end{proposition}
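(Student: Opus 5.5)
The plan is to take the identification $\mathcal A_{\mathcal P}\cong\mathrm{Fun}(\widehat{\mathcal P},\mathbb C)$ as given by the statement, and read it as the Gelfand-type statement: each phase observable $p\in\mathcal P$ acts on $\widehat{\mathcal P}$ by the evaluation $\chi\mapsto\langle p,\chi\rangle$. Nondegeneracy of the pairing (Phase Duality in Theorem~\ref{thm:applicability}) makes these evaluation functions separate points of $\widehat{\mathcal P}$. I will assume, as the finite-depth setting implicitly requires, that $\widehat{\mathcal P}$ is finite, or at least that only finitely many labels occur on any admissible module. Once this is fixed, the argument is pure commutative algebra: the decomposition of a module over a finite product of copies of $\mathbb C$.

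First I will construct the idempotents. Set $e_\chi:=\delta_\chi\in\mathrm{Fun}(\widehat{\mathcal P},\mathbb C)$ and transport it to $\mathcal A_{\mathcal P}$ through the canonical identification. Pointwise multiplication of functions gives $e_\chi^2=e_\chi$ and $e_\chi e_{\chi'}=0$ for $\chi\neq\chi'$. Since $\widehat{\mathcal P}$ is finite, the constant function $1$ equals $\sum_\chi\delta_\chi$, so $1=\sum_\chi e_\chi$ in $\mathcal A_{\mathcal P}$. This step is where the finiteness hypothesis is used; in an infinite setting one would instead need a locally finite or graded notion of admissible module, so that the sum is finite on each vector.

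Next comes the decomposition of an admissible module $V$, which I take to be unital. Any $v\in V$ satisfies $v=1\cdot v=\sum_\chi e_\chi v$, so $V=\sum_\chi e_\chi V$. The sum is direct: if $\sum_\chi e_\chi v_\chi=0$, applying $e_{\chi'}$ and using orthogonality isolates $e_{\chi'}v_{\chi'}=0$.

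It remains to show $e_\chi V=V_\chi$.
\begin{itemize}
\item For $\subseteq$: any $c\in\mathcal A_{\mathcal P}$ satisfies $c\,e_\chi=c(\chi)e_\chi$ as functions, so $c\cdot e_\chi v=c(\chi)e_\chi v$.
\item For $\supseteq$: if $v\in V_\chi$, then $e_\chi v=e_\chi(\chi)v=v$.
\end{itemize}
Canonicity follows because the $e_\chi$ are determined by the pairing alone, so no choices enter. The decomposition is therefore functorial in $\mathcal A_{\mathcal P}$-module maps, since these commute with each $e_\chi$.

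The main obstacle is not the algebra but justifying the identification and the finiteness of $\widehat{\mathcal P}$. I will argue as follows. The phase observables span a commutative algebra of functions on $\widehat{\mathcal P}$ that contains the constants and separates points. On a finite set, such a subalgebra is all of $\mathrm{Fun}(\widehat{\mathcal P},\mathbb C)$: for $\chi\neq\chi'$ pick $f$ with $f(\chi)\neq f(\chi')$, normalise it to vanish at $\chi'$ and equal $1$ at $\chi$, and take products over $\chi'$ to obtain $\delta_\chi$. Point separation holds by nondegeneracy in the $\widehat{\mathcal P}$ variable. The finiteness of $\widehat{\mathcal P}$ I will extract from the finite-depth setting together with nondegeneracy, stating it explicitly as part of the standing hypotheses if necessary.
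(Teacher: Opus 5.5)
Your proposal is correct and follows the paper's proof essentially step for step. The paper likewise defines the evaluation functions $f_p(\chi)=\langle p,\chi\rangle$ and invokes point separation from nondegeneracy ``in the finite discrete setting'' to get $\mathcal A_{\mathcal P}\cong\mathrm{Fun}(\widehat{\mathcal P},\mathbb C)$; it then uses the delta idempotents to obtain $v=\sum_\chi e_\chi v$, directness from orthogonality, and $e_\chi V=V_\chi$. Your product-of-normalised-separators argument just fills in the step the paper asserts.

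One caveat: finiteness of $\widehat{\mathcal P}$ cannot be extracted from finite depth. For example, $\mathcal P=\mathbb Z$ is abelian, hence of depth $0$, yet its dual is the circle group. So your fallback of adding finiteness as a standing hypothesis is the right one, and it is exactly what the paper tacitly does.
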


\begin{proof}
The purpose of this argument is to show that once Phase Duality exists, phase response 
is not merely a bookkeeping device but carries substantial algebraic
consequences. In particular, we show that the phase-response partition induced
by the pairing with $\widehat{\mathcal P}$ canonically generates a commutative
phase algebra whose idempotents induce a decomposition of every admissible object.
Thus phase response inevitably controls how admissible modules decompose.

We begin by identifying the commutative phase algebra with a function algebra on
the dual label set. By hypothesis, there is a nondegenerate phase pairing
\[
\langle\,\cdot\,,\,\cdot\,\rangle :
\mathcal P \times \widehat{\mathcal P} \longrightarrow \mathbb T.
\]
For each $p\in\mathcal P$ define its evaluation function on
$\widehat{\mathcal P}$ by
\[
f_p(\chi) := \langle p,\chi\rangle \in \mathbb T \subset \mathbb C.
\]
Let $\mathcal A_{\mathcal P}$ denote the commutative $\mathbb C$-algebra
generated by these evaluation functions, equivalently by the phase observables
viewed as commuting functions of the dual label.

Since the pairing is nondegenerate, the evaluation functions separate points of
$\widehat{\mathcal P}$. In the finite discrete setting, this identifies the
algebra they generate canonically with the algebra of complex-valued functions
on $\widehat{\mathcal P}$,
\[
\mathcal A_{\mathcal P} \;\cong\; \mathrm{Fun}(\widehat{\mathcal P},\mathbb C),
\]
via evaluation on phase labels.

Now let $V$ be an admissible $\mathcal A_{\mathcal P}$-module. Since
$\mathcal A_{\mathcal P}$ is commutative, its action on $V$ can be organised
simultaneously by algebra characters. Under the above identification, each
$\chi\in\widehat{\mathcal P}$ defines a character
\[
\mathrm{ev}_\chi:\mathcal A_{\mathcal P}\to\mathbb C,
\qquad
\mathrm{ev}_\chi(c)=c(\chi).
\]
We define the corresponding phase-response subspace by
\[
V_\chi
:=
\{\,v\in V:\; c\cdot v = c(\chi)\,v \text{ for all } c\in\mathcal A_{\mathcal P}\,\}.
\]

When $\widehat{\mathcal P}$ is finite, the function algebra contains the
idempotents $e_\chi$ given by delta functions at $\chi$. These idempotents are
pairwise orthogonal and satisfy
\[
e_\chi^2=e_\chi,
\qquad
e_\chi e_{\chi'}=0 \text{ for } \chi\neq\chi',
\qquad
1=\sum_{\chi\in\widehat{\mathcal P}} e_\chi.
\]
Applying the module action yields, for every $v\in V$,
\[
v=\sum_{\chi\in\widehat{\mathcal P}} e_\chi v,
\]
and orthogonality implies that this sum is direct. Moreover, a vector lies in
$e_\chi V$ if and only if it transforms under $\mathcal A_{\mathcal P}$ via the
character $\mathrm{ev}_\chi$, so that $e_\chi V = V_\chi$.

Consequently, every admissible $\mathcal A_{\mathcal P}$-module decomposes
canonically as
\[
V=\bigoplus_{\chi\in\widehat{\mathcal P}} V_\chi,
\qquad
V_\chi = \{\,v\in V:\; c\cdot v=c(\chi)v \ \text{for all } c\in\mathcal A_{\mathcal P}\,\},
\]
with the idempotent relations stated in the proposition. This shows that the
phase-response partition carries canonical algebraic consequences within the
present phase-theoretic framework.
\end{proof}

\begin{remark}
The decomposition in Proposition~\ref{prop:phase-response-decomp} is a
phase-theoretic decomposition into simultaneous eigenspaces for the commutative
phase algebra $\mathcal A_{\mathcal P}$. In semisimple settings, this phase
decomposition aligns with the usual block decompositions and may refine the
decomposition into irreducible representations. In non semisimple settings, such as the Frobenius Heisenberg regime developed in
\cite{GildeaAPT2}, the phase decomposition still exists but does not imply a
Maschke Wedderburn decomposition of the full representation theory.
\end{remark}

Representative domains include Weyl Heisenberg systems, finite stabiliser
frameworks, algebraic phases over Frobenius rings, translation invariant linear
dynamics, and finite integrable models. In such settings, Algebraic Phase Theory provides a structural framework that
strongly constrains the admissible interaction behaviour and its associated
phase organisation.

\section{Structural Obstructions}

The Structural Applicability Criterion is equally effective in identifying
domains in which Algebraic Phase Theory cannot apply. Failure of any one of the
criteria obstructs the existence of a non-artificial phase-theoretic structure
within the present framework.

Although Theorem~\ref{thm:applicability} is stated as an equivalence, its two
directions play conceptually different roles. The forward direction identifies
the precise structural conditions under which Algebraic Phase Theory applies,
while the reverse direction implies, by contrapositive, that failure of any one
condition prevents applicability. The purpose of the following result is to
isolate this obstruction explicitly. Doing so makes precise the sense in which
Algebraic Phase Theory imposes strong structural constraints on admissible
domains. Outside the
structural regime identified in Theorem~\ref{thm:applicability}, no
nondegenerate, non-artificial phase-theoretic structure compatible with the
intrinsic interaction framework considered here can be constructed. This formulation allows failure modes to be
analysed directly as no go results, rather than only as logical negations of
existence.

\begin{theorem}
\label{thm:obstruction}
Let $D$ be a mathematical or physical domain equipped with a notion of
interaction or evolution. If $D$ violates at least one condition of
Theorem~\ref{thm:applicability}, then $D$ admits no nondegenerate Algebraic Phase
Theory structure compatible with its intrinsic interaction framework within the
present setting.
\end{theorem}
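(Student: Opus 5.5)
The plan is to obtain Theorem~\ref{thm:obstruction} as the contrapositive of the forward direction $(\Rightarrow)$ of Theorem~\ref{thm:applicability}. Rather than simply quoting that implication, I would argue by contradiction, organised by which condition fails, so that each failure mode appears as an explicit no-go mechanism. Suppose $D$ violates at least one of Phase Duality, Symmetry Compatibility, or Finite Termination. Suppose also that $D$ nevertheless carries a nondegenerate APT structure $(\mathcal P,\circ)$ extracted intrinsically from $D$. This structure comes with a pairing $\langle\cdot,\cdot\rangle:\mathcal P\times\widehat{\mathcal P}\to\mathbb T$, a defect degree $\delta$, and a filtration $\mathcal P_0\subseteq\cdots\subseteq\mathcal P_d=\mathcal P$ with $d<\infty$. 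I will show that the existence of these data forces all three conditions, contradicting the assumed violation.

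The three cases are as follows.

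\emph{Failure of Phase Duality.} The nondegenerate pairing on $\mathcal P$ pulls back along the quotient $\mathsf{Obs}(D)\to\mathcal P$ to a pairing on observables. Intrinsic extraction means this pulled-back pairing is the phase pairing of $D$. Its nondegeneracy is then exactly condition (1). If (1) fails, one of the two pathologies in the proof of Theorem~\ref{thm:applicability} must occur: either a nontrivial $p$ is invisible to every dual probe, or a nontrivial $\chi$ pairs trivially with all of $\mathcal P$. Either contradicts nondegeneracy of the given structure.

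\emph{Failure of Symmetry Compatibility.} Functoriality of the extraction yields, for each $g\in\mathsf{Dyn}(D)$, an induced $g_\#$ with $g_\#(p\circ q)=g_\#(p)\circ g_\#(q)$ and $\delta\circ g_\#=\delta$. Applying this to $p\circ q\circ(q\circ p)^{-1}$ shows that $g_\#$ preserves commutators. Hence $g$ normalises phase interaction at the level of observables modulo $\sim$. This is the content of condition (2), so its failure is impossible.

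\emph{Failure of Finite Termination.} The bound $\delta\le d$ shows that iterated commutators stabilise at depth $d$. This is exactly condition (3).

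The main obstacle is the first case. There I must justify that the pairing carried by an \emph{arbitrary} nondegenerate APT structure on $D$ agrees with the phase pairing intrinsic to $D$; otherwise the violation of (1) in $D$ would not transfer to $\mathcal P$. My first attempt would be to use the standing convention, stated in the remark after Proposition~\ref{prop:independent-partitions}, that admissible observables and dynamics are determined intrinsically and cannot be enlarged or truncated. By that convention, any extracted pairing is determined by the interaction data of $D$. The argument would then conclude from the non-artificiality clause that a structure whose pairing does not descend from $D$ is artificial, and is therefore excluded by the hypothesis ``compatible with its intrinsic interaction framework''.
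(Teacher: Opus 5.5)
Your proposal is correct at the paper's level of rigour and follows essentially the same route. The paper's proof is the same three-case analysis, but it argues directly in each case from the failed condition to a failed axiom: a non-injective phase-response map collapses the quotient carrier, no $g_\#$ can preserve the filtration, and bounding the defect depth needs a non-functorial truncation. This is the per-case contrapositive of your derivation of the three conditions from an assumed structure, and the identification you flag as the main obstacle is handled there exactly as you propose, via the non-artificiality and intrinsic-compatibility clauses built into the statement.
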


\begin{proof}
Assume that $D$ violates at least one condition of the Structural Applicability
Criterion. We show that in each case a required axiom of a non-artificial APT
structure fails.

\smallskip
\noindent
\emph{Failure of Phase Duality.}
Suppose there is no nondegenerate phase pairing. Then for any candidate choice of
dual labels $\widehat{\mathcal P}$ and any candidate pairing
\[
\langle\,\cdot\,,\,\cdot\,\rangle : \mathsf{Obs}(D)\times \widehat{\mathcal P} \to \mathbb{T},
\]
the induced phase-response map
\[
\Phi:\mathsf{Obs}(D)\longrightarrow \mathbb{T}^{\widehat{\mathcal P}},
\qquad
\Phi(a):=\big(\langle a,\chi\rangle\big)_{\chi\in\widehat{\mathcal P}},
\]
fails to be injective. Hence there exist $a\neq b$ with
\[
\Phi(a)=\Phi(b)
\quad\Longleftrightarrow\quad
\langle a,\chi\rangle=\langle b,\chi\rangle
\ \text{for all }\chi\in\widehat{\mathcal P}.
\]
Any quotient phase carrier defined by identifying observables with the same
phase-response profile therefore forces $[a]=[b]$ in $\mathcal P$, collapsing distinct
intrinsic interaction data. In particular, any defect or commutator information
distinguishing $a$ and $b$ in $D$ cannot be faithfully represented in $\mathcal P$.
Thus no nondegenerate, non-artificial APT structure compatible with the
present reconstruction framework can be constructed.

\smallskip
\noindent
\emph{Failure of Symmetry Compatibility.}
Suppose there exists $g\in\mathsf{Dyn}(D)$ such that $g$ does not normalise the
interaction, i.e. there exist $a,b\in\mathsf{Obs}(D)$ with
\[
g(a\star b)\neq g(a)\star g(b),
\quad \text{or equivalently} \quad
g([a,b]_\star)\neq [g(a),g(b)]_\star,
\]
where $[a,b]_\star:=a\star b\star (b\star a)^{-1}$ whenever this expression is
defined (or by a chosen defect operator in the purely algebraic setting).
Then no map
\[
g_\#:\mathcal P\to \mathcal P
\]
can simultaneously satisfy
\[
g_\#([a]\circ[b])=g_\#([a])\circ g_\#([b])
\quad\text{and}\quad
g_\#([[a],[b]])=[[g_\#([a])],[g_\#([b])]].
\]
In particular, any defect filtration $(\mathcal P_k)$ defined from commutators cannot be
functorially preserved, meaning there exists $k$ with
\[
g_\#(\mathcal P_k)\not\subseteq \mathcal P_k.
\]
This contradicts the functorial invariance axiom required in APT, so no
compatible APT structure arises within the present framework.

\smallskip
\noindent
\emph{Failure of Finite Termination.}
Suppose defect propagation does not terminate and is not controlled by a finite
filtration. Then for every $N\in\mathbb{N}$ there exist observables
$a_1,\dots,a_N\in\mathsf{Obs}(D)$ such that the $N$-fold iterated defect
\[
[a_1,a_2,\dots,a_N]_\star
:= [a_1,[a_2,\dots,[a_{N-1},a_N]_\star\dots]_\star]_\star
\]
is not generated by defect data of depth $<N$ in any intrinsic way. Hence there
is no finite $d$ such that all defect elements lie in a terminal stage $\mathcal P_d$ of
a canonically defined defect filtration. Any attempt to force a bounded defect
degree function $\delta:\mathcal P\to\mathbb{Z}_{\ge 0}$ therefore requires truncating
intrinsic defect data beyond some externally chosen cutoff, which is artificial
and not functorial. Thus the resulting structure falls outside the finite-depth APT framework.

\smallskip
In each case, violation of one Structural Applicability condition prevents the
construction of a nondegenerate phase carrier equipped with a finite,
symmetry-invariant defect filtration. Therefore $D$ admits no non-artificial APT
structure compatible with its intrinsic interaction.
\end{proof}

\begin{corollary}
The following classes of domains generally fall outside the finite-depth
APT framework considered here:
\begin{itemize}
  \item generic nonlinear partial differential equations with genuinely
        nonlinear interaction,
  \item chaotic or sensitive dynamical systems,
  \item infinite-dimensional operator algebras with non-terminating commutator
        growth,
  \item metric-dependent variational or analytic frameworks whose interaction
        structure depends on extrinsic choices.
\end{itemize}
\end{corollary}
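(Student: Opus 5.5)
The plan is to derive the corollary from Theorem~\ref{thm:obstruction}: for each listed class I will exhibit a specific condition of Theorem~\ref{thm:applicability} that fails generically. The word ``generally'' signals that the claim is about the generic member of each class, so for each class I only need a structural mechanism that forces failure in the typical case, not a proof for every instance. I will treat the four classes in order and match each to the condition it most naturally violates.

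For \emph{generic nonlinear PDEs}, the failure is Phase Duality together with Symmetry Compatibility. The plan is to observe that a genuinely nonlinear interaction $\star$ is not respected by any phase pairing satisfying $\langle a\star b,\chi\rangle=\langle a,\chi\rangle\langle b,\chi\rangle$. Plane-wave labels stop being multiplicative because nonlinear mode coupling mixes Fourier labels. So any candidate $\Phi$ is either non-injective or incompatible with $\star$, and the Phase Duality case of the proof of Theorem~\ref{thm:obstruction} applies.

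For \emph{chaotic or sensitive dynamics}, the natural failure is Symmetry Compatibility. The evolution maps $g$ do not normalise the interaction: sensitive dependence means $g(a\star b)\neq g(a)\star g(b)$ for generic pairs. The second case of that proof then forbids a functorial $g_\#$ preserving the filtration.

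For \emph{infinite-dimensional operator algebras with non-terminating commutator growth}, Finite Termination fails directly. The hypothesis supplies iterated commutators of every length that are not generated at lower depth, which is exactly the input of the third case of the proof. For \emph{metric-dependent frameworks}, the admissible observables and dynamics depend on extrinsic choices. Any phase carrier built from them is therefore artificial in the sense of the Remark following Proposition~\ref{prop:independent-partitions}, and I will argue that no intrinsic pairing or filtration survives a change of metric.

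The main obstacle is making ``generic'' precise enough that each class really violates a condition, rather than merely failing to satisfy it obviously. Integrable nonlinear PDEs and nilpotent infinite-dimensional algebras are counterexamples to any universal claim. The first thing I would try is to phrase each item as: whenever the class's defining feature (genuine nonlinearity, sensitivity, unbounded commutator depth, extrinsic dependence) holds, the corresponding condition fails. Theorem~\ref{thm:obstruction} then gives the conclusion, and the exceptional subclasses such as integrable models are explicitly excluded by that defining feature.
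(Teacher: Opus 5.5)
Your overall strategy matches the paper's: for each class you exhibit a failing condition of Theorem~\ref{thm:applicability} and invoke Theorem~\ref{thm:obstruction}. Your assignments for three classes also agree with the paper's: chaotic systems (Symmetry Compatibility), operator algebras with non-terminating commutator growth (Finite Termination), and metric-dependent frameworks (no canonical, choice-independent pairing, i.e.\ Phase Duality). The gap is in the first item. The paper locates the failure for genuinely nonlinear PDEs in Finite Termination: iterated defects $[a,b]_\star$ produce new independent terms at every level, so the defect filtration $\mathcal P_0\subsetneq\mathcal P_1\subsetneq\cdots$ never stabilises. You locate it in Phase Duality instead, and your inference does not go through. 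You argue that plane-wave labels are not multiplicative under nonlinear mode coupling, hence every candidate $\Phi$ is non-injective or incompatible with $\star$. But failure of Phase Duality is a claim about all dual objects and all pairings, and you have excluded only one. Your own exceptional class refutes the step. KdV has exactly the quadratic mode coupling you describe, so Fourier labels fail there too, yet inverse scattering supplies a different intrinsic labelling. So ``genuine nonlinearity'' in your sense does not exclude integrable models, contrary to your final paragraph.

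Making the dichotomy precise does not rescue it either. If $\langle\cdot,\chi\rangle$ is multiplicative from $(\mathsf{Obs}(D),\star)$ into the abelian group $\mathbb T$, then $\Phi(a\star b)=\Phi(b\star a)$. So $\Phi$ is non-injective for every noncommutative interaction, including the Weyl--Heisenberg and stabiliser domains the paper treats as satisfying the criterion. The mechanism therefore does not single out nonlinear PDEs.

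The repair is to argue as the paper does, and as you already do for the third item. The defect filtration is determined by $\star$ alone, with no choice involved, so it suffices to show that this one canonical object fails to stabilise. Read ``genuinely nonlinear'' as ``iterated defects of every depth are not generated by lower-depth defect data''; that is exactly the input to the Finite Termination case of Theorem~\ref{thm:obstruction}. Your additional claim that Symmetry Compatibility also fails for this class is asserted without argument and should be dropped or supported.
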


\begin{proof}
We verify each item by showing that at least one condition of
Theorem~\ref{thm:applicability} fails. The conclusion then follows from the
Structural Obstruction Theorem~\ref{thm:obstruction}.

\smallskip
\noindent
\emph{(1) Generic nonlinear partial differential equations.}
Let $D$ be a nonlinear PDE system whose interaction law $\star$ involves
nonlinear products of fields and their derivatives. Consider the defect
operator induced by interaction,
\[
[a,b]_\star := a\star b \star (b\star a)^{-1},
\]
or its algebraic analogue. In genuinely nonlinear systems, iterated defects
produce new independent terms at each level, yielding an infinite ascending
chain
\[
\mathcal P_0 \subsetneq \mathcal P_1 \subsetneq \mathcal P_2 \subsetneq \cdots
\]
in the defect-generated filtration of phase data. In generic nonlinear settings, one does not expect the resulting defect
filtration to stabilise at finite depth. Hence defect propagation does not
terminate and is not controlled by a finite intrinsic filtration. This violates
the Finite Termination condition, so by Theorem~\ref{thm:obstruction} no
compatible APT structure can exist.

\smallskip
\noindent
\emph{(2) Chaotic or sensitive dynamical systems.}
Let $g_t$ denote the admissible time evolution on $D$. In a chaotic or sensitive
system, admissible dynamics typically fail to normalise the interaction and
defect relations unless they are artificially restricted. In particular, there
exist observables $a,b$ and times $t$ with
\[
[g_t(a),g_t(b)]_\star \neq g_t([a,b]_\star).
\]
Equivalently, admissible dynamics fail to normalise the interaction and defect
relations. Therefore no induced map
\[
g_{t\#}:\mathcal P\to \mathcal P
\]
can preserve the phase interaction or any defect filtration. This violates the
Symmetry Compatibility condition, and Theorem~\ref{thm:obstruction} applies.

\smallskip
\noindent
\emph{(3) Infinite-dimensional operator algebras without terminating defect.}
Let $A$ be an infinite-dimensional algebra with commutator-defined defect
operator $[x,y]=xy-yx$. Define a commutator filtration by
\[
A_0 := Z(A), \qquad
A_{k+1} := \langle [A_k,A]\rangle.
\]
If this filtration does not stabilise, i.e.
\[
A_0 \subsetneq A_1 \subsetneq A_2 \subsetneq \cdots,
\]
then there exists no finite defect depth controlling commutator propagation.
Any attempt to impose a bounded defect degree truncates intrinsic algebraic
data and is therefore artificial. This violates the Finite Termination
condition, so the resulting structure lies outside the finite-depth APT setting.

\smallskip
\noindent
\emph{(4) Metric-dependent variational or analytic frameworks.}
Let $D$ be a variational or analytic system whose interaction structure depends
on a choice of metric or analytic data. Then the induced phase pairing
\[
\langle\,\cdot\,,\,\cdot\,\rangle_g :
\mathcal P_g \times \widehat{\mathcal P}_g \to \mathbb{T}
\]
depends on the external choice $g$. Changing $g$ alters the phase pairing and
hence the phase-response profiles. Therefore the construction does not naturally produce a canonical,
intrinsic, nondegenerate phase pairing that is invariant under admissible
symmetries. This obstructs the Phase Duality condition and may also
obstruct Symmetry Compatibility. Hence
Theorem~\ref{thm:obstruction} applies.

\smallskip
In each case, at least one Structural Applicability condition fails. By
Theorem~\ref{thm:obstruction}, Algebraic Phase Theory is therefore inapplicable
to all domains listed in the corollary.
\end{proof}

In these settings, attempts to impose finite-depth phase-theoretic structure
may require altering or truncating intrinsic interaction data. The resulting constructions do
not reflect genuine phase interaction and therefore fall outside the scope of
Algebraic Phase Theory.

\section{Collapse and Rigidity Phenomena}

A characteristic feature of domains passing the applicability criterion is the
collapse of apparent flexibility. Once phase duality, symmetry compatibility,
and finite termination are present, the admissible structure becomes strongly constrained by
phase data together with the finite defect filtration.

\begin{theorem}
\label{thm:collapse}
Let $D$ satisfy Theorem~\ref{thm:applicability}, and let
$(\mathcal P,\circ)$ be the associated Algebraic Phase Theory structure
with finite defect filtration
\[
\mathcal P_0 \subseteq \mathcal P_1 \subseteq \cdots \subseteq \mathcal P_d
= \mathcal P.
\]
Then admissible dynamics and admissible representations are rigid in the
following sense:
\begin{enumerate}
  \item every admissible dynamic is determined, up to admissible phase equivalence,
by its induced action on phase data and preservation of the defect filtration;
  \item every admissible representation is determined, up to canonical
equivalence, by its restriction to phase data and defect levels;
  \item any admissible weak equivalence of phase structures necessarily preserves the
        full defect filtration. Since preservation of the defect filtration strongly constrains the
associated phase structure, such an equivalence reduces to strong
structural equivalence within the present framework.
\end{enumerate}
\end{theorem}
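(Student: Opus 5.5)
The plan is to derive all three items from the structure supplied by Theorem~\ref{thm:applicability}: the phase carrier $\mathcal P=\mathsf{Obs}(D)/{\sim}$, the nondegenerate pairing with $\widehat{\mathcal P}$, the induced morphisms $g_\#$ preserving $\circ$ and $\delta$, and the finite filtration $\mathcal P_0\subseteq\cdots\subseteq\mathcal P_d=\mathcal P$. The guiding principle is the separation property used in the corollary of Section~3. Since the pairing is nondegenerate, the phase-response profile map $\Phi$ of Proposition~\ref{prop:independent-partitions} is injective on $\mathcal P$. Hence any endomorphism of $\mathcal P$ is determined by its effect on profiles, and therefore by its transpose action $\widehat g$ on $\widehat{\mathcal P}$, defined by $\langle g_\# p,\chi\rangle=\langle p,\widehat g\chi\rangle$.

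\emph{Item 1.} Take two admissible dynamics $g,h$ that induce the same phase data, meaning $g_\#=h_\#$, or equivalently $\widehat g=\widehat h$.
\begin{itemize}
  \item By injectivity of $\Phi$, $g(a)\sim h(a)$ for every $a\in\mathsf{Obs}(D)$.
  \item This is exactly the statement that $g$ and $h$ agree up to admissible phase equivalence, where that equivalence is taken to be $\sim$.
  \item Preservation of the filtration is automatic from the construction $g_\#([a])=[g(a)]$ together with $\delta(g_\#p)=\delta(p)$.
\end{itemize}

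\emph{Item 2.} Restrict an admissible representation $V$ to the commutative phase algebra $\mathcal A_{\mathcal P}$.
\begin{itemize}
  \item Apply Proposition~\ref{prop:phase-response-decomp} to obtain $V=\bigoplus_\chi V_\chi$.
  \item Refine each summand by defect level using the filtration: $\mathcal P_k$ acts on the graded pieces, and $\mathcal P_0$ acts trivially, as in the rigid-core argument.
  \item Show that an intertwiner of restrictions respecting both gradings extends uniquely. Since $d<\infty$, one can induct upward on $k$: at stage $k+1$, the action of new elements is determined by commutators $[[p],[q]]$ with $p\in\mathcal P_k$, which are already fixed at stage $k$.
\end{itemize}

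\emph{Item 3.} Let $F:\mathcal P\to\mathcal P'$ be a weak equivalence preserving $\circ$.
\begin{itemize}
  \item $F$ preserves commutators, so $F(\mathcal P_0)=\mathcal P'_0$, using invertibility of the equivalence to get equality rather than mere inclusion.
  \item The recursive definition of $\mathcal P_{k+1}$ as the subphase generated by $\mathcal P_k$ and its commutators then gives $F(\mathcal P_k)=\mathcal P'_k$ by induction.
  \item Finite termination ensures the induction covers all of $\mathcal P$.
  \item Combining with item 1, $F$ is determined by a bijection of labels compatible with the pairing, and the goal is to conclude that this makes it a strong isomorphism.
\end{itemize}

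The main obstacle is item 2. The statement "determined by restriction to phase data and defect levels" needs a precise notion of admissible representation, and Proposition~\ref{prop:phase-response-decomp} only works in the finite discrete setting with the idempotents $e_\chi$. I would first try to prove that any two representations with isomorphic restrictions are isomorphic, by building the isomorphism level by level through the filtration. If that fails in non-semisimple regimes, as the Remark warns for the Frobenius–Heisenberg case, I would weaken the conclusion to "canonical equivalence" meaning equality of the graded phase-response data, rather than isomorphism of full modules.
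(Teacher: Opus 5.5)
Your items 1 and 3 follow the paper's own route. In item 1, separation by the nondegenerate pairing makes $g_\#$ determined by its phase-response profiles. Note that $g(a)\sim h(a)$ follows directly from $g_\#=h_\#$ and $g_\#([a])=[g(a)]$; nondegeneracy is only needed to pass from $\widehat g=\widehat h$ to $g_\#=h_\#$. In item 3, preservation of $\circ$ and of commutators gives $F(\mathcal P_k)=\mathcal P'_k$ by induction on $k$. However, even the inclusion $F(\mathcal P_0)\subseteq\mathcal P'_0$ needs surjectivity, and the invertibility you invoke to get equality already makes $F$ a bijective $\circ$-homomorphism. So the final step to a strong isomorphism is immediate, not something still to be extracted from item 1. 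The content of ``collapse'' sits in that assumption, exactly as in the paper's appeal to an induced bijection $\widehat F$ on labels.

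The genuine gap is item 2. Your main route depends on two claims that do not hold.

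\emph{First claim: $\mathcal P_0$ acts trivially on $V$.} Central elements act through a central character, by scalars on irreducibles. In the Schr\"odinger representation of a finite Heisenberg group the centre acts by a nontrivial scalar, and this is precisely the situation behind the von Neumann-type uniqueness the theorem alludes to. The paper's rigid-core remark is about the conjugation action of $\mathcal P_0$ on observables, not about the action on a representation space.

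\emph{Second claim: the inductive step.} You assert that the action of elements entering at stage $k+1$ is fixed by commutator data from stage $k$. This fails whenever $\mathcal P$ is noncommutative. For any representation $\pi$ and any one-dimensional character $\lambda$ of $\mathcal P$, the twist $\lambda\otimes\pi$ agrees with $\pi$ on every commutator, and on $\mathcal P_0$ when $\lambda$ is trivial there. Yet $(\lambda\otimes\pi)(p)=\lambda(p)\pi(p)\neq\pi(p)$ in general. Concretely, take a finite Heisenberg group $H$ with centre $Z=[H,H]$. The trivial representation and a nontrivial character of $H/Z$ agree on $Z=\mathcal P_0$ and on all commutators, but they are not isomorphic. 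The identity map intertwines these restrictions without intertwining the full actions, so intertwiners of restrictions do not extend.

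\emph{On the associated structure itself, the induction is vacuous.} The law $\circ$ is defined using $\langle a\star b,\chi\rangle=\langle a,\chi\rangle\langle b,\chi\rangle$ with values in the abelian group $\mathbb T$. Also, $\Phi$ is injective on $\mathsf{Obs}(D)/{\sim}$ by construction. Hence $\Phi(p\circ q)=\Phi(q\circ p)$ forces $p\circ q=q\circ p$, so $\mathcal P_0=\mathcal P$. Either way, your level-by-level mechanism does no work: it is vacuous in the abelian case and false otherwise.

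The paper attempts no such reconstruction. It reads ``restriction to phase data'' as the full assignment $p\mapsto\pi(p)$, assumes $\pi_1(p)=\pi_2(p)$ for all $p\in\mathcal P$, and concludes agreement on all words $p_1\circ\cdots\circ p_n$ by multiplicativity. That is essentially your fallback. The stronger statement you set out to prove, that representations with isomorphic restrictions to $\mathcal A_{\mathcal P}$ and defect levels are isomorphic, cannot be obtained from these hypotheses. In addition, restricting a representation of $\mathcal P$ to $\mathcal A_{\mathcal P}$ presupposes that the operators $\pi(p)$ commute and satisfy the relations of $\mathrm{Fun}(\widehat{\mathcal P},\mathbb C)$.
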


\begin{proof}
Let $D$ satisfy the Structural Applicability Criterion. Then there exists a
nondegenerate phase pairing
\[
\langle\,\cdot\,,\,\cdot\,\rangle :
\mathcal P \times \widehat{\mathcal P} \to \mathbb{T},
\]
a symmetry-compatible interaction $\circ$, and a finite defect filtration
$(\mathcal P_k)_{k=0}^d$.

\smallskip
\noindent
\emph{Rigidity of admissible dynamics.}
Let $g$ be an admissible dynamic in $D$. By Symmetry Compatibility, $g$ induces a
phase morphism
\[
g_\#:\mathcal P\longrightarrow\mathcal P
\]
satisfying
\[
g_\#(p\circ q)=g_\#(p)\circ g_\#(q),
\qquad
g_\#(\mathcal P_k)\subseteq \mathcal P_k \ \text{for all } k.
\]
Thus $g_\#$ preserves both the phase interaction and the defect filtration.

By Phase Duality, the nondegenerate pairing with $\widehat{\mathcal P}$ separates
elements of $\mathcal P$. Consequently, a phase morphism $g_\#$ is determined,
up to admissible phase equivalence, by its induced action on phase-response profiles, that is, by the
assignment
\[
\chi \longmapsto \big(p \mapsto \langle g_\#(p),\chi\rangle\big),
\qquad \chi\in\widehat{\mathcal P},
\]
together with preservation of the defect filtration. Therefore, two admissible
dynamics which induce the same action on $\widehat{\mathcal P}$ and preserve the
filtration $(\mathcal P_k)$ define the same phase morphism.

Since admissible dynamics are extracted intrinsically from phase interaction and
are required to act functorially on the phase carrier, no additional independent admissible degrees of freedom arise within the
present framework. Admissible dynamics therefore exhibit strong rigidity properties.

\smallskip
\noindent
\emph{Rigidity of admissible representations.}
Let $\pi_1,\pi_2 : \mathcal P \to \mathrm{End}(V)$ be admissible representations
of the phase structure $(\mathcal P,\circ)$. By Phase Duality, the nondegenerate
pairing
\[
\langle\,\cdot\,,\,\cdot\,\rangle : \mathcal P \times \widehat{\mathcal P}
\longrightarrow \mathbb{T}
\]
separates elements of $\mathcal P$. Consequently, the action of $\mathcal P$ in
any admissible representation is determined, up to canonical equivalence, by the induced evaluation
on dual phase labels.

By Finite Termination, the defect filtration
\[
\mathcal P_0 \subseteq \mathcal P_1 \subseteq \cdots \subseteq \mathcal P_d
= \mathcal P
\]
stabilises after finite depth $d$. Every element of $\mathcal P$ is therefore controlled by
iterated phase interaction by iterated phase interaction $\circ$ applied to elements of bounded
defect depth at most $d$.

Suppose that $\pi_1$ and $\pi_2$ satisfy
\[
\pi_1(p)=\pi_2(p)
\quad \text{for all } p\in\mathcal P,
\qquad
\pi_1(\mathcal P_k)=\pi_2(\mathcal P_k)
\quad \text{for all } k\le d.
\]
Then for any iterated interaction word
\[
p = p_1 \circ p_2 \circ \cdots \circ p_n \in \mathcal P
\qquad (n\le d),
\]
one has
\[
\pi_1(p)
=
\pi_1(p_1)\pi_1(p_2)\cdots\pi_1(p_n)
=
\pi_2(p_1)\pi_2(p_2)\cdots\pi_2(p_n)
=
\pi_2(p).
\]
Thus $\pi_1$ and $\pi_2$ agree on all elements generated by admissible phase
interaction up to maximal defect depth.

Since the admissible interaction structure is controlled by defect data up to
depth $d$, the two
representations coincide on the entire phase carrier. Hence $\pi_1$ and
$\pi_2$ are equivalent in the canonical sense permitted by Algebraic Phase
Theory. Admissible representations are therefore strongly constrained by phase
data together with the finite defect filtration.

\smallskip
\noindent
\emph{Collapse of weak equivalence.}
Let
\[
F:\mathcal P \longrightarrow \mathcal P'
\]
be an admissible weak equivalence between phase structures, in the sense that
$F$ preserves admissible phase interaction and commutation data. In particular,
for all $p,q\in\mathcal P$ one has
\[
F(p\circ q)=F(p)\circ F(q),
\qquad
F([p,q])=[F(p),F(q)].
\]

Since defect propagation terminates after finite depth, the defect filtration
\[
\mathcal P_0 \subseteq \mathcal P_1 \subseteq \cdots \subseteq \mathcal P_d
= \mathcal P
\]
is intrinsically generated by iterated commutator data. Preservation of
interaction and commutators therefore induces preservation of
\[
F(\mathcal P_k)=\mathcal P_k' \qquad \text{for all } k\le d,
\]
so $F$ preserves the full defect filtration. By Phase Duality, the nondegenerate phase pairing
\[
\langle\,\cdot\,,\,\cdot\,\rangle :
\mathcal P \times \widehat{\mathcal P} \longrightarrow \mathbb T
\]
separates elements of $\mathcal P$. Since $F$ preserves phase interaction and
commutation data, it preserves the associated phase pairing structure and hence induces
a bijection
\[
\widehat{F}:\widehat{\mathcal P'} \longrightarrow \widehat{\mathcal P}
\]
on dual phase labels. Preservation of phase interaction, the full defect filtration, and the dual
label structure strongly constrains the resulting phase carrier. Consequently, $F$ is an
isomorphism of phase structures compatible with the defect filtration. That is,
any admissible weak equivalence reduces to strong structural equivalence within
the present framework.

\smallskip
Together, these arguments establish rigidity of admissible dynamics and
representations, and show that admissible weak equivalences reduce to strong structural equivalence
within the present framework.
\end{proof}

This rigidity phenomenon places Algebraic Phase Theory in the lineage of
frameworks in which duality and reconstruction principles force structural
uniqueness. Classical examples include the uniqueness of the Schr\"odinger
representation in the presence of canonical commutation relations
\cite{vonNeumann1931}, reconstruction from symmetry and tensor structure in
Tannakian settings \cite{EtingofGelaki2015Tensor}, and rigidity of extensions
governed by Galois-type duality principles \cite{Serre,Isaacs}.

\section{Conclusion}

Phenomena often described as exceptional, such as the effectiveness of the
Bethe Ansatz, the rigidity of stabiliser codes, the privileged role of Clifford
groups, or the apparent uniqueness of certain canonical representations, are
frequently treated as surprising or highly specialised structural features.

The Structural Applicability Criterion provides a different interpretation of
these observations within the framework of Algebraic Phase Theory.
Such phenomena arise naturally in domains admitting nondegenerate phase
duality, symmetry-compatible interaction, and finite termination of defect
propagation.  In these settings, the admissible structure becomes strongly
constrained by the associated phase data and finite defect filtration.
What appears exceptional is therefore the visibility of rigidity phenomena in
domains where admissible interaction is tightly controlled.

\emph{Algebraic Phase Theory is not presented here as a generalisation of
quantum mechanics or Fourier analysis. Rather, the results of this paper show
that, within the finite-depth framework considered here, phase duality,
symmetry compatibility, and finite termination together impose strong
constraints on the resulting admissible structure.}

In this sense, Algebraic Phase Theory is not intended as a universal modelling
framework, but as a structural framework for analysing rigidity, defect
propagation, and phase organisation in domains satisfying the applicability
criterion.  The results of this paper function as structural principles for the
APT program: they identify the conditions governing applicability, describe the
mechanisms by which rigidity and obstruction arise, and clarify both the scope
and limitations of phase-theoretic methods.

By isolating the structural criteria associated with applicability, this work
provides a principled guide for future applications and helps distinguish
domains naturally compatible with the finite-depth APT framework from those
lying outside its intended scope. Domains admitting nondegenerate phase
duality, symmetry-compatible interaction, and finite termination of defect
propagation exhibit strong rigidity behaviour governed by the associated phase
structure.

The present work develops structural conditions associated with
phase-theoretic rigidity. A subsequent series studies the internal geometry of
phase response and the obstructions associated with that structure. Together,
these results position Algebraic Phase Theory as a framework for analysing how
phase organisation and defect propagation constrain admissible interaction
behaviour within finite-depth settings.

\section*{Declaration of generative AI and AI-assisted technologies in the manuscript preparation process}

AI-assisted tools were used for language organization and structural clarity. 
All mathematical content is the author’s own, and the author takes full 
responsibility for the manuscript.

\section*{Funding}

This research did not receive any specific grant from funding agencies in the public, commercial, or not-for-profit sectors.

\bibliographystyle{amsplain}
\bibliography{references}

@article{CalderbankShor,
 author = {Calderbank, A. R. and Shor, Peter W.},
 title = {Good quantum error-correcting codes exist},
 fjournal = {Physical Review A, Third Series},
 journal = {Phys. Rev. A (3)},
 issn = {1050-2947},
 volume = {54},
 number = {2},
 pages = {1098--1105},
 year = {1996},
 language = {English},
 doi = {10.1103/PhysRevA.54.1098},
 zbMATH = {7918813},
 Zbl = {1546.81049}
}

@book{EtingofGelaki2015Tensor,
 author = {Etingof, P. and Gelaki, S. and Nikshych, D. and Ostrik, V.},
 title = {Tensor categories},
 fseries = {Mathematical Surveys and Monographs},
 series = {Math. Surv. Monogr.},
 issn = {0076-5376},
 volume = {205},
 isbn = {978-1-4704-2024-6},
 year = {2015},
 publisher = {Providence, RI: American Mathematical Society (AMS)},
 language = {English},
 doi = {10.1090/surv/205},
 zbMATH = {6444410},
 Zbl = {1365.18001}
}

@book{Folland,
 author = {Folland, G. B.},
 title = {Harmonic analysis in phase space},
 fseries = {Annals of Mathematics Studies},
 series = {Ann. Math. Stud.},
 volume = {122},
 isbn = {0-691-08528-5; 0-691-08527-7},
 year = {1989},
 publisher = {Princeton, NJ: Princeton University Press},
 language = {English},
 doi = {10.1515/9781400882427},
 zbMATH = {42454},
 Zbl = {0682.43001}
}

@misc{GildeaAPT2,
  author = {{J.} Gildea},
  title  = {Algebraic Phase Theory II: The Frobenius--Heisenberg Phase and Boundary Rigidity},
  year   = {2026},
  note   = {Available at arXiv:2601.16341},
  eprint = {},
  archivePrefix = {arXiv},
  primaryClass  = {math.QA},
  url    = {https://arxiv.org/abs/2601.16341}
}

@article{Gowers2001Fourier,
 author = {Gowers, W. T.},
 title = {A new proof of {Szemer{\'e}di}'s theorem},
 fjournal = {Geometric and Functional Analysis. GAFA},
 journal = {Geom. Funct. Anal.},
 issn = {1016-443X},
 volume = {11},
 number = {3},
 pages = {465--588 (2001); erratum 11, no. 4, 869},
 year = {2001},
 language = {English},
 doi = {10.1007/s00039-001-0332-9},
 zbMATH = {1657034},
 Zbl = {1028.11005}
}

@phdthesis{Gottesman,
  author  = {Gottesman, D.},
  title   = {Stabilizer Codes and Quantum Error Correction},
  school  = {California Institute of Technology},
  year    = {1997},
  note    = {Ph.D. thesis},
}

@article{GreenTao2008Inverse,
 author = {Green, B. and Tao, T.},
 title = {An inverse theorem for the {Gowers} {{\(U^3(G)\)}} norm},
 fjournal = {Proceedings of the Edinburgh Mathematical Society. Series II},
 journal = {Proc. Edinb. Math. Soc., II. Ser.},
 issn = {0013-0915},
 volume = {51},
 number = {1},
 pages = {73--153},
 year = {2008},
 language = {English},
 doi = {10.1017/S0013091505000325},
 zbMATH = {5249333},
 Zbl = {1202.11013}
}

@article{Howe1979Heisenberg,
 author = {Howe, R.},
 title = {On the role of the {Heisenberg} group in harmonic analysis},
 fjournal = {Bulletin of the American Mathematical Society. New Series},
 journal = {Bull. Am. Math. Soc., New Ser.},
 issn = {0273-0979},
 volume = {3},
 pages = {821--843},
 year = {1980},
 language = {English},
 doi = {10.1090/S0273-0979-1980-14825-9},
 zbMATH = {3690004},
 Zbl = {0442.43002}
}

@book{Isaacs,
 author = {Isaacs, I. Martin},
 title = {Character theory of finite groups.},
 edition = {Corrected reprint of the 1976 original},
 isbn = {0-8218-4229-3},
 year = {2006},
 publisher = {Providence, RI: AMS Chelsea Publishing},
 language = {English},
 zbMATH = {5080028},
 Zbl = {1119.20005}
}

@book{Serre,
 author = {Serre, Jean-Pierre},
 title = {Linear representations of finite groups. {Translated} from the {French} by {Leonard} {L}. {Scott}},
 fseries = {Graduate Texts in Mathematics},
 series = {Grad. Texts Math.},
 issn = {0072-5285},
 volume = {42},
 year = {1977},
 publisher = {Springer, Cham},
 language = {English},
 zbMATH = {3552764},
 Zbl = {0355.20006}
}

@article{vonNeumann1931,
 author = {von Neumann, J.},
 title = {Die {Eindeutigkeit} der {Schr{\"o}dingerschen} {Operatoren}},
 fjournal = {Mathematische Annalen},
 journal = {Math. Ann.},
 issn = {0025-5831},
 volume = {104},
 pages = {570--578},
 year = {1931},
 language = {German},
 doi = {10.1007/BF01457956},
 url = {https://eudml.org/doc/159483},
 zbMATH = {3000708},
 Zbl = {0001.24703}
}

@article{Weil1964SurCertains,
 author = {Weil, A.},
 title = {Sur certains groupes d'op{\'e}rateurs unitaires},
 fjournal = {Acta Mathematica},
 journal = {Acta Math.},
 issn = {0001-5962},
 volume = {111},
 pages = {143--211},
 year = {1964},
 language = {French},
 doi = {10.1007/BF02391012},
 zbMATH = {3322144},
 Zbl = {0203.03305}
}

\end{document}